\theoremstyle{plain}
\newtheorem{theorem}{Theorem}
\newtheorem{proposition}[theorem]{Proposition}
\newtheorem{corollary}[theorem]{Corollary}
\newtheorem{lemma}[theorem]{Lemma}
\theoremstyle{remark}
\newtheorem{remark}{Remark}
\newtheorem{example}{Example}
\DeclareMathAlphabet\mathoo{U}{eur}{b}{n}
\DeclareMathOperator{\Real}{Re}
\DeclareMathOperator{\co}{ch}
\begin{document}
\title[The Gelfand--Shilov type estimate]{The Gelfand--Shilov type estimate\\ for Green's function\\ of the bounded solutions problem}

\author{V.G. Kurbatov}
 \address{Department of Mathematical Physics,
Voronezh State University\\ 1, Universitetskaya Square, Voronezh 394018, Russia}
 \thanks{$^*$ The first author was supported by the Ministry of Education and Science of the Russian Federation under state order No.~3.1761.2017.}
 \email{kv51@inbox.ru}

\author{I.V. Kurbatova}
 \address{Department of Software Development and Information Systems Administration,
Vo\-ro\-nezh State University\\ 1, Universitetskaya Square, Vo\-ro\-nezh 394018, Russia}
 \thanks{$^*$ The second author was supported by the Russian Foundation for Basic Research under research project No.~16-01-00197.}
 \email{la\_soleil@bk.ru}

\subjclass{47A60; 15A16; 65F60; 65D05; 34B27; 34B40; 34D09} 

\keywords{Green's function; functional calculus; bounded solutions problem; Gelfand--Shilov's estimate; Newton's interpolating polynomial}

\begin{abstract}
An analogue of the Gelfand--Shilov estimate of the matrix exponential is proved for Green's function of the problem of bounded solutions of the ordinary differential equation $x'(t)-Ax(t)=f(t)$.
\end{abstract}

\maketitle

\section*{Introduction}\label{s:Introduction}
In~\cite[p.~68, formula (13)]{Gelfand-Shilov-GF3:eng}, it was established the following statement. Let the eigenvalues of an $N\times N$-matrix $A$ lie in the half-plane $\Real\lambda<-\gamma$. Then the matrix exponential satisfies the estimate
\begin{equation*}
\Vert e^{At}\Vert\le e^{-\gamma t}\sum_{j=0}^{N-1}c_jt^j,\qquad t>0,
\end{equation*}
where the coefficients $c_j\ge0$ depend only on $\Vert A\Vert$ (see Corollary~\ref{p:Bylov-Vinograd} for details).
In particular, it easily follows from this estimate that
$\lim_{t\to+\infty}e^{(\gamma-\varepsilon)t}\Vert e^{At}\Vert=0$ for any $\varepsilon>0$ uniformly for any bounded family of matrices $A$.
Applications of estimates of $\Vert e^{At}\Vert$ can be found in~\cite{Bylov-Vinograd:rus1,Gelfand-Shilov-GF3:eng,Gil1993Estimates}.

In this paper, we prove a similar estimate for Green's function for the problem of bounded on the axis solutions of the differential equation
\begin{equation*}
x'(t)-Ax(t)=f(t).
\end{equation*}
The proof is similar to that of~\cite{Gelfand-Shilov-GF3:eng} and uses some constructions from~\cite{Kurbatov-Kurbatova17:Arxiv}.

In Sections~\ref{s:Newton polynomial} and~\ref{s:Matrix functions}, preliminaries are collected. In Section~\ref{s:Green}, we recall the definition of Green's function and some its properties and describe its representation in the form of the Newton interpolating polynomial. In Section~\ref{s:estimate} we prove our estimate (Theorem~\ref{t:VinGr}).

\section{The Newton interpolating polynomial}\label{s:Newton polynomial}

Let $\mu_1$, $\mu_2$, \dots, $\mu_N$ be given complex numbers (some of them may coincide
with others) called \emph{points of interpolation}. Let a complex-valued function $f$ be
defined and analytic in a neighbourhood $U$ of these points. \emph{Divided differences}
of the function $f$ with respect to the points $\mu_1$,
$\mu_2$, \dots, $\mu_N$ are defined (see, e.g.,~\cite{Gelfond:eng,Jordan}) by the recurrent
relations
 \begin{equation*}
 \begin{split}
f[\mu_i]&=f(\mu_i),\\
f[\mu_i,\mu_{i+1}]&=\frac{f(\mu_{i+1})-f(\mu_i)}{\mu_{i+1}-\mu_i},\\
f[\mu_i,\dots,\mu_{i+m}]&=\frac{f(\mu_{i+1},\dots,\mu_{i+m})
-f(\mu_{i},\dots,\mu_{i+m-1})} {\mu_{i+m}-\mu_i}.
 \end{split}
 \end{equation*}
In these formulas, if the denominator vanishes, then the quotient is understood as the derivative with respect to the corresponding argument of the previous divided difference. 

\begin{proposition}[{\rm\cite[ch.~1, formula (54)]{Gelfond:eng}}]\label{p:f[] via Gamma}
Let the function $f$ be analytic in a neighbourhood of the points of interpolation $\mu_1$, $\mu_2$, \dots, $\mu_N$. Then
\begin{equation*}
f[\mu_{1},\dots,\mu_{N}]=\frac1{2\pi i}\int_{\Gamma}\frac{f(z)}{\Omega(z)}\,dz,
\end{equation*}
where the contour $\Gamma$ encloses all the points of interpolation and
\begin{equation*}
\Omega(z)=\prod_{k=1}^N(z-\mu_k).
\end{equation*}
\end{proposition}



\begin{proposition}[{\rm\cite[ch.~1, formula (48)]{Gelfond:eng}}]\label{p:repr of Delta}
Let the points of interpolation $\mu_j$ be distinct. Then
\begin{equation*}
f[\mu_{1},\dots,\mu_{N}]=\sum_{j=1}^N\frac{f(\mu_j)}{\prod\limits_{\substack{k=1\\k\neq j}}^N(\mu_j-\mu_k)}.
\end{equation*}
\end{proposition}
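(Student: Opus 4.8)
The plan is to derive the formula directly from the integral representation furnished by Proposition~\ref{p:f[] via Gamma}, which expresses the divided difference as $\frac1{2\pi i}\int_\Gamma\frac{f(z)}{\Omega(z)}\,dz$ with $\Omega(z)=\prod_{k=1}^N(z-\mu_k)$. Since the interpolation points are now assumed \emph{distinct}, the polynomial $\Omega$ has $N$ simple zeros, located precisely at $\mu_1,\dots,\mu_N$, and these are the only singularities of the integrand inside $\Gamma$ (recall that $f$ is analytic in a neighbourhood of all the points, so it contributes no poles). The integrand is therefore meromorphic inside $\Gamma$ with simple poles at each $\mu_j$, and I would invoke the residue theorem to rewrite the contour integral as $\sum_{j=1}^N\operatorname{Res}_{z=\mu_j}\frac{f(z)}{\Omega(z)}$.

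The one computation to carry out is the residue at each simple pole. At a simple zero $\mu_j$ of $\Omega$ one has the standard formula $\operatorname{Res}_{z=\mu_j}\frac{f(z)}{\Omega(z)}=\frac{f(\mu_j)}{\Omega'(\mu_j)}$, so it remains to evaluate $\Omega'(\mu_j)$. Differentiating the product by the Leibniz rule and substituting $z=\mu_j$, every summand still containing the factor $(z-\mu_j)$ vanishes, so only the single term in which that factor was differentiated survives, giving $\Omega'(\mu_j)=\prod_{k\neq j}(\mu_j-\mu_k)$. Inserting this into each residue and summing reproduces exactly the asserted expression.

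I do not anticipate any genuine obstacle: once Proposition~\ref{p:f[] via Gamma} is available, the argument is routine residue (equivalently, partial-fraction) bookkeeping. The only point meriting a word of care is confirming that each $\mu_j$ is truly a \emph{simple} pole, which is where the hypothesis of distinctness enters, guaranteeing that $\Omega$ has simple zeros; the conclusion then holds irrespective of whether any $f(\mu_j)$ happens to vanish. (Alternatively, one could prove the identity by induction on $N$ from the recurrence defining the divided differences, but the residue route is shorter and leverages the preceding proposition directly.)
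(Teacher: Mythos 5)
Your proposal is correct and follows exactly the route the paper takes: the paper's proof consists of the single remark that the statement follows from Proposition~\ref{p:f[] via Gamma}, and your residue computation at the simple zeros of $\Omega$ is precisely the omitted detail.
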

 \begin{proof}
The statement follows from Proposition~\ref{p:f[] via Gamma}.
 \end{proof}


\begin{proposition}[{\rm\cite[ch.~1, formula (49)]{Gelfond:eng}}]\label{p:Gelfond(49)}
Let the domain $U$ of $f$ contain the convex hull of the set $\{\mu_{1},\dots,\mu_{N}\}$.
Then the following estimate holds{\rm:}
\begin{equation*}
\bigl|f[\mu_{1},\dots,\mu_{N}]\bigr|\le\frac1{(N-1)!}\max_{\lambda\in\co\;\{\mu_{1},\dots,\mu_{N}\}}|f^{(N-1)}(\lambda)|,
\end{equation*}
where $\co\;\{\mu_{1},\dots,\mu_{N}\}$ means the convex hull of the set $\{\mu_{1},\dots,\mu_{N}\}$.
\end{proposition}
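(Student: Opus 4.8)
The plan is to deduce Proposition~\ref{p:Gelfond(49)} from the integral representation in Proposition~\ref{p:f[] via Gamma} by passing to a suitable parametrised form of the divided difference. The cleanest route is the Hermite--Genocchi integral formula, which expresses the divided difference as an average of the $(N-1)$-st derivative of $f$ over the standard simplex: one writes
\begin{equation*}
f[\mu_1,\dots,\mu_N]=\int_{\Sigma_{N-1}}f^{(N-1)}\Bigl(\sum_{k=1}^N s_k\mu_k\Bigr)\,ds,
\end{equation*}
where $\Sigma_{N-1}=\{(s_1,\dots,s_N):s_k\ge0,\ \sum_k s_k=1\}$ is the standard simplex and $ds$ denotes Lebesgue measure on it. This identity holds for all confluent configurations of the $\mu_k$ (the apparent singularities in the recurrent definition are removable), which is exactly the generality required, and it is the standard bridge between the contour formula already available and the derivative bound we want.

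First I would establish the Hermite--Genocchi formula itself, either by induction on $N$ using integration by parts together with the recurrence defining divided differences, or by deforming the contour in Proposition~\ref{p:f[] via Gamma} and applying the residue/Cauchy calculus; since the paper has Proposition~\ref{p:f[] via Gamma} at hand, I would lean on a direct verification that both sides satisfy the same recurrence and agree in the base case $N=1$. Once the integral representation is in place, the estimate is immediate: the key point is that every argument $\sum_{k=1}^N s_k\mu_k$ with $s_k\ge0$ and $\sum_k s_k=1$ is by definition a convex combination of the interpolation points, hence lies in $\co\,\{\mu_1,\dots,\mu_N\}$. Therefore
\begin{equation*}
\bigl|f[\mu_1,\dots,\mu_N]\bigr|\le\int_{\Sigma_{N-1}}\bigl|f^{(N-1)}\bigl(\textstyle\sum_k s_k\mu_k\bigr)\bigr|\,ds\le\max_{\lambda\in\co\,\{\mu_1,\dots,\mu_N\}}\bigl|f^{(N-1)}(\lambda)\bigr|\cdot\operatorname{vol}(\Sigma_{N-1}).
\end{equation*}

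The final ingredient is the volume of the standard simplex, $\operatorname{vol}(\Sigma_{N-1})=\tfrac1{(N-1)!}$, which supplies exactly the factor $\frac1{(N-1)!}$ appearing in the statement; this is a routine computation that I would state rather than grind out. The hypothesis that $U$ contains the convex hull guarantees that $f^{(N-1)}$ is defined and analytic at every point $\sum_k s_k\mu_k$ of the simplex image, so the integrand is legitimate and the maximum on the right-hand side is finite. I expect the main obstacle to be the proof of the Hermite--Genocchi representation in the confluent case, i.e.\ handling coincident interpolation points where the recurrent definition switches to derivatives; the induction step must be arranged so that the integration-by-parts argument remains valid uniformly, and care is needed to justify that the simplex integral depends continuously on the $\mu_k$ so that the formula extends from the distinct case (covered by Proposition~\ref{p:repr of Delta}) to the general one by a limiting argument. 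Everything after that representation is a one-line majorisation plus the simplex-volume fact.
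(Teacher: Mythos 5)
The paper offers no proof of this proposition at all --- it is simply quoted from Gelfond's book (formula (49) of chapter 1) --- so there is nothing internal to compare against; your Hermite--Genocchi argument is the standard proof of this bound and is essentially the one in the cited source, which derives it from the iterated-integral representation of the divided difference, and it is correct, including the reduction of the confluent case by continuity. One small normalization point to pin down when you write it out: the factor $\frac1{(N-1)!}$ emerges only if the ``Lebesgue measure'' on $\Sigma_{N-1}$ is understood as the pushforward of $ds_1\cdots ds_{N-1}$ under the parametrization $s_N=1-s_1-\cdots-s_{N-1}$, i.e.\ as the iterated integral
\begin{equation*}
\int_0^1\!\!\int_0^{t_1}\!\!\cdots\int_0^{t_{N-2}} f^{(N-1)}\bigl(\mu_1+t_1(\mu_2-\mu_1)+\cdots+t_{N-1}(\mu_N-\mu_{N-1})\bigr)\,dt_{N-1}\cdots dt_1,
\end{equation*}
whereas the genuine $(N-1)$-dimensional surface measure of the simplex embedded in $\mathbb R^N$ has total mass $\sqrt N/(N-1)!$; with the former convention your one-line majorisation gives exactly the stated constant.
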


The set $\lambda_1,\dots,\lambda_M\in\mathbb C$ of \emph{interpolation points} together with the set $n_1,\dots,n_M\in\mathbb N$ of their \emph{multiplicities} is called \emph{multiple interpolation data}.
We set $N=n_1+\dots+n_M$.

Let $U\subseteq\mathbb C$ be an open neighbourhood of the set $\lambda_1,\dots,\lambda_M$ of the points of interpolation and $f:\,U\to\mathbb C$ be an analytic function.
An \emph{interpolating polynomial} of $f$ that corresponds to the multiple interpolation data is a polynomial $p$ of degree степени $N-1$ satisfying the equalities
\begin{equation*}
p^{(j)}(\lambda_k)=g^{(j)}(\lambda_k),\qquad k=1,\dots,M;\;j=0,1,\dots,n_k-1.
\end{equation*}

\begin{proposition}[{\rm\cite[p.~20]{Jordan}}]\label{p:Newton poly}
For any analytic function $f$, the interpolating polynomial exists and unique.
Let $\mu_1,\dots,\mu_N$ be the points of multiple interpolation data $\lambda_1,\dots,\lambda_M$, listed in an arbitrary order and repeated as many times as their multiplicities $n_1,\dots,n_M$.
Then the interpolating polynomial possesses the representation
\begin{equation}\label{e:poly Newton}
\begin{split}
p(z)&=f[\mu_1]+f[\mu_1,\mu_2](z-\mu_1)+
f[\mu_1,\mu_2,\mu_3](z-\mu_1)(z-\mu_2)\\
&+f[\mu_1,\mu_2,\mu_3,\mu_4](z-\mu_1)(z-\mu_2)(z-\mu_3)+\dots\\
&+f[\mu_1,\mu_2,\dots,\mu_N](z-\mu_1)(z-\mu_2)\dots(z-\mu_{N-1}).
 \end{split}
\end{equation}
\end{proposition}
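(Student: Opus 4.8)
The plan is to dispatch uniqueness first by a root count, and then to obtain existence and the representation \eqref{e:poly Newton} simultaneously by checking directly that the Newton polynomial interpolates $f$.

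For uniqueness, suppose $p_1$ and $p_2$ are both interpolating polynomials of degree $\le N-1$. Their difference $q=p_1-p_2$ has degree $\le N-1$ and satisfies $q^{(j)}(\lambda_k)=0$ for $k=1,\dots,M$ and $j=0,\dots,n_k-1$; thus each $\lambda_k$ is a zero of $q$ of multiplicity at least $n_k$. Counting zeros with multiplicity, $q$ has at least $n_1+\dots+n_M=N$ of them, which forces $q\equiv 0$, since a nonzero polynomial of degree at most $N-1$ has at most $N-1$ zeros. Hence $p_1=p_2$.

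For existence and the formula, write $p$ for the right-hand side of \eqref{e:poly Newton} and, for $r=1,\dots,N$, let $p_r$ be its truncation through the term carrying $f[\mu_1,\dots,\mu_r]$. I would prove by induction on $r$ the Newton error identity
\begin{equation*}
f(z)-p_r(z)=f[\mu_1,\dots,\mu_r,z]\prod_{j=1}^{r}(z-\mu_j),
\end{equation*}
the base case $r=1$ being merely the definition of $f[\mu_1,z]$. The inductive step rewrites $f(z)-p_r(z)=\bigl(f[\mu_1,\dots,\mu_{r-1},z]-f[\mu_1,\dots,\mu_r]\bigr)\prod_{j=1}^{r-1}(z-\mu_j)$ and applies the recurrence defining the divided differences, together with their symmetry under permutation of the arguments, to identify the first factor with $f[\mu_1,\dots,\mu_r,z](z-\mu_r)$. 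Both this symmetry and the analyticity in $z$ of $f[\mu_1,\dots,\mu_N,z]$ used below are read off from the integral representation of Proposition~\ref{p:f[] via Gamma}, whose contour may be kept fixed while the points vary inside it. Setting $r=N$ then gives $f(z)-p(z)=f[\mu_1,\dots,\mu_N,z]\prod_{k=1}^{M}(z-\lambda_k)^{n_k}$ with the first factor analytic near the interpolation points, so that $f-p$ vanishes to order at least $n_k$ at each $\lambda_k$. Hence $p^{(j)}(\lambda_k)=f^{(j)}(\lambda_k)$ for $j=0,\dots,n_k-1$, and since $\deg p\le N-1$ is visible from \eqref{e:poly Newton}, this exhibits $p$ as an interpolating polynomial and proves both existence and the representation; the arbitrary ordering of $\mu_1,\dots,\mu_N$ is immaterial by the symmetry just invoked and, a posteriori, by uniqueness.

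The main obstacle is the confluent case, where the $\mu_j$ are not distinct: the recurrence for divided differences is then read through derivatives, and one must ensure the error identity and the recurrence step survive this reading. I expect to settle this by analyticity: every divided difference occurring is an analytic function of the interpolation points via Proposition~\ref{p:f[] via Gamma}, so each identity, being valid for pairwise distinct points (where it reduces to the elementary Lagrange--Newton computation), extends by continuity to arbitrary, possibly coinciding, points.
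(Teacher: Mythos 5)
The paper supplies no proof of this proposition --- it is quoted from \cite[p.~20]{Jordan} --- so there is nothing internal to compare against; judged on its own, your argument is correct. Uniqueness by counting zeros with multiplicity is standard and fine. For existence and formula~\eqref{e:poly Newton}, the induction on the truncations $p_r$ via the remainder identity $f(z)-p_r(z)=f[\mu_1,\dots,\mu_r,z]\prod_{j=1}^{r}(z-\mu_j)$ works: the inductive step is precisely the defining recurrence applied to the list $\mu_r,\mu_1,\dots,\mu_{r-1},z$ combined with the symmetry of divided differences under permutation of their arguments, and both that symmetry and the analyticity of $z\mapsto f[\mu_1,\dots,\mu_r,z]$ inside a fixed contour are immediate from Proposition~\ref{p:f[] via Gamma}. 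Your handling of the confluent case by continuity (indeed analyticity) in the interpolation points is legitimate and is exactly the device the paper itself employs in the proof of Proposition~\ref{p:mu-nu:2}. The one step that deserves to be spelled out --- and which you do state --- is that the final identity $f-p=f[\mu_1,\dots,\mu_N,\cdot\,]\prod_{k=1}^{M}(\cdot-\lambda_k)^{n_k}$ delivers the Hermite conditions $p^{(j)}(\lambda_k)=f^{(j)}(\lambda_k)$ for $j\le n_k-1$ because the divided-difference factor is analytic at each $\lambda_k$. Compared with the bare citation, your route has the advantage of producing the Newton form and its interpolation property simultaneously, using only tools (the contour representation and the passage to the limit) already present elsewhere in the paper.
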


Representation~\eqref{e:poly Newton} is called~\cite{Gelfond:eng,Jordan} the \emph{interpolating polynomial in the Newton form} or shortly \emph{the Newton
interpolating polynomial} with respect to the points $\mu_1$, $\mu_2$, \dots, $\mu_N$.

\section{Matrix functions}\label{s:Matrix functions}

Let $A$ be a complex $N\times N$-matrix.
Let $\mathbf1$ be the identity matrix. The polynomial
\begin{equation*}
p_A(\lambda)=\det(\lambda\mathbf1-A)
\end{equation*}
is called the \emph{characteristic polynomial} of the matrix $A$. Let $\lambda_1$, \dots, $\lambda_M$ be the complete set of the roots of the characteristic polynomial $p_A$, and $n_1$, \dots, $n_M$ be their multiplicities; thus $n_1+\dots+n_M=N$. It is well known that $\lambda_1$, \dots, $\lambda_M$ are \emph{eigenvalues} of $A$. The numbers $n_k$ are called (\emph{algebraic{\rm)} multiplicities} of the eigenvalues $\lambda_k$. The set $\sigma(A)=\{\,\lambda_1, \dots, \lambda_M\,\}$ is called the \emph{spectrum} of $A$.

Let $U\subseteq\mathbb C$ be an open set that contains the spectrum $\sigma(A)$.
Let $f:\,U\to\mathbb C$ be an analytic function. The \emph{function $f$ of the matrix} $A$ is defined~\cite[p.~17]{Daletskii-Krein:eng},~\cite[ch.~VII]{Dunford-Schwartz-I:eng},~\cite[ch.~V, \S~1]{Hille-Phillips:eng} by the formula
\begin{equation*}
f(A)=\frac1{2\pi i}\int_\Gamma f(\lambda)(\lambda\mathbf1-A)^{-1}\,d\lambda,
\end{equation*}
where the contour $\Gamma$ surrounds the spectrum $\sigma(A)$.

\begin{proposition}[{\rm\cite[Theorem 5.2.5]{Hille-Phillips:eng}}]\label{p:func calc}
The mapping $f\mapsto f(A)$ preserves algebraic operations, i.~e.,
\begin{align*}
(f+g)(A)&=f(A)+g(A),\\
(\alpha f)(A)&=\alpha f(A),\\
(fg)(A)&=f(A)g(A),
\end{align*}
where $f+g$, $\alpha f$ and $fg$ are defined pointwise.
\end{proposition}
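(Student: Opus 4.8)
The plan is to dispose of the first two identities at once and to devote the real work to the product rule, which is the only substantive assertion. Fix a single contour $\Gamma$ that surrounds $\sigma(A)$ and lies in $U$. Since $(f+g)(\lambda)=f(\lambda)+g(\lambda)$ and $(\alpha f)(\lambda)=\alpha f(\lambda)$ hold pointwise and integration over $\Gamma$ is linear, one gets immediately
\[
(f+g)(A)=\frac1{2\pi i}\int_\Gamma\bigl(f(\lambda)+g(\lambda)\bigr)(\lambda\mathbf1-A)^{-1}\,d\lambda=f(A)+g(A),
\]
and likewise $(\alpha f)(A)=\alpha f(A)$. (Here I would recall that, by Cauchy's theorem, $f(A)$ is independent of the admissible contour, since $\lambda\mapsto f(\lambda)(\lambda\mathbf1-A)^{-1}$ is analytic off $\sigma(A)$; this freedom is what lets me compute the two factors on different contours below.)

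For multiplicativity I would choose two contours $\Gamma_1$ and $\Gamma_2$, both surrounding $\sigma(A)$ and lying in $U$, arranged so that $\Gamma_1$ together with its interior is contained in the interior of $\Gamma_2$. Computing $f(A)$ along $\Gamma_1$ and $g(A)$ along $\Gamma_2$, multiplying, and interchanging the matrix product with the integrals (Fubini's theorem applies, the integrands being continuous on the compact contours) gives
\[
f(A)g(A)=\frac1{(2\pi i)^2}\int_{\Gamma_1}\int_{\Gamma_2}f(\lambda)g(\mu)(\lambda\mathbf1-A)^{-1}(\mu\mathbf1-A)^{-1}\,d\mu\,d\lambda.
\]
The pivotal step is to replace the product of resolvents using the first resolvent identity
\[
(\lambda\mathbf1-A)^{-1}(\mu\mathbf1-A)^{-1}=\frac{(\lambda\mathbf1-A)^{-1}-(\mu\mathbf1-A)^{-1}}{\mu-\lambda},
\]
which is elementary and valid here since $\Gamma_1$ and $\Gamma_2$ are disjoint, so $\lambda\neq\mu$ throughout.

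Substituting and splitting into two terms, I would evaluate the two inner (scalar) integrals by classical complex analysis. In the term carrying $(\lambda\mathbf1-A)^{-1}$, the inner integral in $\mu$ equals $\frac1{2\pi i}\int_{\Gamma_2}\frac{g(\mu)}{\mu-\lambda}\,d\mu=g(\lambda)$ by Cauchy's integral formula, because $\lambda\in\Gamma_1$ lies inside $\Gamma_2$; this term therefore collapses to $\frac1{2\pi i}\int_{\Gamma_1}f(\lambda)g(\lambda)(\lambda\mathbf1-A)^{-1}\,d\lambda=(fg)(A)$. In the term carrying $(\mu\mathbf1-A)^{-1}$, the inner integral in $\lambda$ equals $\frac1{2\pi i}\int_{\Gamma_1}\frac{f(\lambda)}{\mu-\lambda}\,d\lambda=0$ by Cauchy's theorem, because $\mu\in\Gamma_2$ lies outside $\Gamma_1$, making the integrand analytic inside $\Gamma_1$; this term vanishes. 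Combining the two contributions yields $f(A)g(A)=(fg)(A)$.

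I expect the only real obstacle to be the geometric bookkeeping of the two contours: one must verify that $\Gamma_1$ is strictly enclosed by $\Gamma_2$, that both enclose $\sigma(A)$, and that both — together with the annular region between them — remain inside the common domain $U$ on which $f$ and $g$ are analytic, so that Cauchy's integral formula governs one inner integral while Cauchy's theorem kills the other. Once this configuration is fixed, the remaining ingredients — linearity of the integral, Fubini's theorem, and the resolvent identity — are entirely routine.
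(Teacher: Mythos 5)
The paper offers no proof of this proposition at all --- it is quoted verbatim from the literature (Hille--Phillips, Theorem 5.2.5) --- and your argument is precisely the classical one given there and in Dunford--Schwartz: linearity of the contour integral for the first two identities, and for multiplicativity the two nested contours $\Gamma_1\subset\operatorname{int}\Gamma_2$, the first resolvent identity, and the evaluation of the two scalar inner integrals by Cauchy's formula and Cauchy's theorem respectively. Your proof is correct; the only point worth stating more carefully is that when $U$ is not simply connected (here $U=\mathbb C\setminus i\mathbb R$, and $\Gamma$ is really a cycle of several small circles) the steps ``$\frac1{2\pi i}\int_{\Gamma_2}\frac{g(\mu)}{\mu-\lambda}\,d\mu=g(\lambda)$'' and ``$\frac1{2\pi i}\int_{\Gamma_1}\frac{f(\lambda)}{\mu-\lambda}\,d\lambda=0$'' should be justified by the winding-number (homology) form of Cauchy's theorem, i.e.\ $n(\Gamma_2,\lambda)=1$, $n(\Gamma_1,\mu)=0$, and $n(\Gamma_j,w)=0$ for $w\notin U$, rather than by an appeal to a literal ``interior'' of $\Gamma_2$ that might leave $U$ --- a refinement of exactly the geometric bookkeeping you already flag.
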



\begin{proposition}[{\rm see, e.g., \cite[Proposition 2.3]{Frommer-Simoncini}}]\label{p:f->p}
Let $p$ be an interpolating polynomial of $f$ that corresponds to the points $\lambda_1,\dots,\lambda_M$ of the spectrum of the matrix $A$ counted according to their
multiplicities $n_1,\dots,n_M$.
Then
\begin{equation*}
p(A)=f(A).
\end{equation*}
\end{proposition}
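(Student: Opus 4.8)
The plan is to reduce the claim to the statement that the error function $h=f-p$ satisfies $h(A)=0$. Indeed, by the linearity part of Proposition~\ref{p:func calc} we have $h(A)=f(A)-p(A)$, so proving $h(A)=0$ is equivalent to the desired equality $p(A)=f(A)$. The whole proof therefore rests on exploiting the Cauchy-integral definition of $h(A)$ together with the fact that $h$ vanishes to high order at the eigenvalues.

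First I would translate the interpolation conditions into vanishing conditions on $h$. By the definition of the interpolating polynomial, $p^{(j)}(\lambda_k)=f^{(j)}(\lambda_k)$ for $k=1,\dots,M$ and $j=0,1,\dots,n_k-1$, whence $h^{(j)}(\lambda_k)=0$ for the same indices. Thus $h$ has a zero of order at least $n_k$ at each eigenvalue $\lambda_k$; equivalently, $h(\lambda)/p_A(\lambda)$ extends analytically across each $\lambda_k$, where $p_A(\lambda)=\prod_{k=1}^M(\lambda-\lambda_k)^{n_k}$. Next I would examine the singularities of the integrand in $h(A)=\frac{1}{2\pi i}\int_\Gamma h(\lambda)(\lambda\mathbf1-A)^{-1}\,d\lambda$. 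Writing the resolvent through the adjugate, $(\lambda\mathbf1-A)^{-1}=p_A(\lambda)^{-1}\operatorname{adj}(\lambda\mathbf1-A)$, and noting that $\operatorname{adj}(\lambda\mathbf1-A)$ is a matrix of polynomials in $\lambda$, one sees that each entry of the resolvent has poles only at the $\lambda_k$, of order at most $n_k$. Since $h$ vanishes to order at least $n_k$ there, the product $h(\lambda)(\lambda\mathbf1-A)^{-1}$ has only removable singularities and therefore extends to a matrix-valued function analytic everywhere inside $\Gamma$.

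With the integrand analytic on and inside the contour, Cauchy's theorem yields $h(A)=0$, and hence $p(A)=f(A)$. The one point demanding care is the bound on the pole order of the resolvent at each eigenvalue, which I expect to be the main obstacle to state cleanly; I would secure it from the adjugate representation combined with the factorization $p_A(\lambda)=\prod_{k=1}^M(\lambda-\lambda_k)^{n_k}$, after which the conclusion follows from linearity of the functional calculus and a single application of Cauchy's theorem.
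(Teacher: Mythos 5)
Your argument is correct and complete: setting $h=f-p$, using linearity to reduce the claim to $h(A)=0$, bounding the pole order of the resolvent at each $\lambda_k$ by $n_k$ via the adjugate representation, and concluding by Cauchy's theorem is the standard proof of this fact. The paper itself gives no proof of this proposition --- it only cites Frommer--Simoncini --- and your route is exactly the expected one. The single point you leave implicit is that $p(A)$ computed through the Cauchy-integral calculus coincides with the matrix obtained by substituting $A$ directly into the polynomial $p$; this follows from Proposition~\ref{p:func calc} together with the standard identities $1(A)=\mathbf 1$ and $\lambda\mapsto\lambda$ giving $A$, and is worth a sentence since the statement $p(A)=f(A)$ is read with the left-hand side meaning direct substitution.
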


\begin{remark}\label{r:n_k}
Proposition~\ref{p:f->p} remains valid if one assumes that $n_1$, \dots, $n_M$ are the maximal sizes of the corresponding Jordan blocks. This assumption decreases the degree $N-1$ of the interpolating polynomial.
\end{remark}

\section{Green's function}\label{s:Green}
In this Section, we recall the definition and some properties of Green's function.

Let $A$ be a complex $N\times N$-matrix. We consider the differential equation
\begin{equation}\label{e:x'=Ax+f}
x'(t)=Ax(t)+f(t),\qquad t\in\mathbb R.
\end{equation}
We are interested in \emph{bounded solutions problem}, i.e. seeking bounded solutions $x:\,\mathbb R\to\mathbb C^N$ under the assumption that the free term $f:\,\mathbb R\to\mathbb C^N$ is a bounded function. The bounded solutions problem has its origin in the work of Perron~\cite{Perron}. Its different modifications can be found in~\cite{BaskakovMS15:eng,Burd-Kolesov-Krasnoselskii69,Daletskii-Krein:eng,Henry81:eng,
KurbatovSMZ86:eng,Mukhamadiev81:eng,Pechkurov12:eng,Pokutnyi,Przeradzki}; see also references therein.

Suppose that $\sigma(A)$ does not intersect the imaginary axis. In this case the functions
\begin{align*}
\exp^+_t(\lambda)&=\begin{cases}
e^{\lambda t}, & \text{if $\Real\lambda<0$},\\
0, & \text{if $\Real\lambda>0$},\end{cases}\\
\exp^-_t(\lambda)&=\begin{cases}
0, & \text{if $\Real\lambda<0$},\\
e^{\lambda t}, & \text{if $\Real\lambda>0$},\end{cases}\\
g_t(\lambda)&=\begin{cases}
-\exp^-_t(\lambda), & \text{if $t<0$},\\
\exp^+_t(\lambda), & \text{if $t>0$}
\end{cases}
\end{align*}
are analytic in the neighbourhood $\mathbb C\setminus i\mathbb R$ of the spectrum $\sigma(A)$. We set
\begin{equation}\label{e:Green's function}
\mathcal G(t)=g_t(A),\qquad t\neq0.
\end{equation}
The function $\mathcal G$ is called~\cite{Daletskii-Krein:eng} \emph{Green's function} of the bounded solutions problem for equation~\eqref{e:x'=Ax+f}.

%

The main property of Green's function is described in the following theorem.
\begin{theorem}[{\rm\cite[Theorem 4.1, p.~81]{Daletskii-Krein:eng}}]\label{t:Green}
Equation~\eqref{e:x'=Ax+f} has a unique bounded on $\mathbb R$ continuously differentiable solution $x$ for any bounded continuous
function $f$ if and only if the spectrum $\sigma(A)$ does not intersect the
imaginary axis.
This solution possesses the representation
\begin{equation*}
x(t)=\int_{-\infty}^\infty \mathcal G(t-s)f(s)\,ds,
\end{equation*}
where $\mathcal G$ is Green's function~\eqref{e:Green's function} of equation~\eqref{e:x'=Ax+f}.
\end{theorem}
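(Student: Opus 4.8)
The plan is to realise the two branches of $g_t$ as matrix exponentials cut down by the Riesz projections onto the parts of $\sigma(A)$ lying in the open left and right half-planes, and then to verify the integral representation by direct differentiation. First I would introduce the complementary projections $P_-=\exp^+_0(A)$ and $P_+=\exp^-_0(A)$. Since $\exp^+_0$ and $\exp^-_0$ are analytic on $\mathbb C\setminus i\mathbb R\supseteq\sigma(A)$ and there satisfy $\exp^+_0+\exp^-_0\equiv1$, $(\exp^+_0)^2=\exp^+_0$, $(\exp^-_0)^2=\exp^-_0$, and $\exp^+_0\exp^-_0\equiv0$, Proposition~\ref{p:func calc} yields $P_-+P_+=\mathbf1$, $P_\pm^2=P_\pm$, $P_-P_+=0$, and all three commute with $A$. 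Because $\exp^+_t(\lambda)=e^{\lambda t}\exp^+_0(\lambda)$ and $\exp^-_t(\lambda)=e^{\lambda t}\exp^-_0(\lambda)$, the multiplicativity of the functional calculus gives
\begin{equation*}
\mathcal G(t)=e^{At}P_-\quad(t>0),\qquad \mathcal G(t)=-e^{At}P_+\quad(t<0).
\end{equation*}

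Next I would record the exponential decay of $\mathcal G$. On the range of $P_-$ the spectrum of $A$ lies in $\{\,\Real\lambda<-\alpha\,\}$ and on the range of $P_+$ in $\{\,\Real\lambda>\alpha\,\}$, where $\alpha=\operatorname{dist}\bigl(\sigma(A),i\mathbb R\bigr)>0$. The Gelfand--Shilov estimate recalled in the Introduction then supplies a constant $C$ with
\begin{equation*}
\Vert e^{At}P_-\Vert\le Ce^{-\alpha t}\;(t\ge0),\qquad \Vert e^{At}P_+\Vert\le Ce^{\alpha t}\;(t\le0),
\end{equation*}
so that $\Vert\mathcal G(t)\Vert\le Ce^{-\alpha|t|}$ for $t\neq0$. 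Consequently, for every bounded continuous $f$ the convolution $x(t)=\int_{-\infty}^{\infty}\mathcal G(t-s)f(s)\,ds$ converges absolutely and obeys $\Vert x\Vert_\infty\le(2C/\alpha)\Vert f\Vert_\infty$; in particular $x$ is bounded.

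To see that $x$ solves the equation, I would split the convolution at the jump of $\mathcal G$,
\begin{equation*}
x(t)=\int_{-\infty}^{t}e^{A(t-s)}P_-f(s)\,ds-\int_{t}^{\infty}e^{A(t-s)}P_+f(s)\,ds,
\end{equation*}
and differentiate each term by the Leibniz rule, which is legitimate because $f$ is continuous and the kernels decay exponentially. The boundary contributions at $s=t$ are $P_-f(t)$ and $P_+f(t)$, while the interior terms reproduce $A$ applied to the respective integrals; summing and using $P_-+P_+=\mathbf1$ gives
\begin{equation*}
x'(t)=\bigl(P_-+P_+\bigr)f(t)+Ax(t)=Ax(t)+f(t).
\end{equation*}
Since the right-hand side is continuous, $x\in C^1$. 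This step is the crux: the jump $\mathcal G(0^+)-\mathcal G(0^-)=P_-+P_+=\mathbf1$ is exactly what generates the inhomogeneous term $f(t)$, and keeping the two one-sided integrals separate is what makes the differentiation across the discontinuity rigorous; this is the place where I expect the main difficulty to lie.

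It remains to treat uniqueness and the necessity of the spectral condition. If $x_1,x_2$ are bounded solutions, then $y=x_1-x_2$ satisfies $y'=Ay$, so $y(t)=e^{At}y(0)$; boundedness as $t\to+\infty$ forces $P_+y(0)=0$ and boundedness as $t\to-\infty$ forces $P_-y(0)=0$, whence $y(0)=0$ and $y\equiv0$. For the converse, suppose some $i\omega\in\sigma(A)\cap i\mathbb R$ and let $v\neq0$ be a corresponding eigenvector; then $e^{i\omega t}v$ is a nonzero bounded solution of the homogeneous equation, so for $f\equiv0$ the bounded solution fails to be unique. Hence well-posedness of the bounded solutions problem forces $\sigma(A)\cap i\mathbb R=\emptyset$, completing both implications.
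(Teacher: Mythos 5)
Your proof is correct; note that the paper itself offers no argument for Theorem~\ref{t:Green}, quoting it directly from Daletskii--Krein, and your route via the Riesz projections $P_\pm$, the exponential dichotomy estimate, and differentiation of the split convolution is precisely the classical argument given in that reference. The only points you leave implicit --- that $\Vert e^{At}v\Vert\to\infty$ as $t\to+\infty$ for $0\neq v\in\operatorname{ran}P_+$ (needed to conclude $P_+y(0)=0$ in the uniqueness step), and the domination argument justifying differentiation under the integral sign --- are routine and easily supplied.
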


Below we assume that $A$ is a fixed complex $N\times N$-matrix and its spectrum does not intersect the imaginary axis. We denote by $\mu_1,\dots,\mu_k$ the roots of the characteristic polynomial that lie in the open right half-plane $\Real\mu>0$ counted according to their multiplicities; and we denote by $\nu_1,\dots,\nu_m$ the roots of the characteristic polynomial that lie in the open left half-plane $\Real\nu<0$ counted according to their multiplicities. Thus, $k+m=N$.
We denote by $\gamma_-,\gamma_+>0$ real numbers such that
\begin{equation}\label{e:gamma pm}
\begin{split}
 \Real\mu_i&\ge\gamma_+\qquad\text{ for }1\le i\le k,\\
 \Real\nu_j&\le-\gamma_-\qquad\text{ for }1\le j\le m.
\end{split}
\end{equation}

\begin{proposition}[{\rm\cite{Kurbatov-Kurbatova17:Arxiv}}]\label{p:mu-nu:2}
Let an analytic function $f$ be identically zero in the open right half-plane $\Real\mu>0$ {\rm(}an example of such a function is the function $\exp^+_t${\rm)}. Then
\begin{equation*}
f[\mu_1,\dots,\mu_k;\nu_{1},\dots,\nu_m]=\tilde f[\nu_{1},\dots,\nu_m],
\end{equation*}
where
\begin{equation*}
\tilde f(z)=\frac{f(z)}{\prod_{i=1}^k(z-\mu_i)}.
\end{equation*}
\end{proposition}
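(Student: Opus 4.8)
The plan is to represent both divided differences by the Cauchy-type integral of Proposition~\ref{p:f[] via Gamma} and then observe that the two integrands coincide, so that the whole identity reduces to a statement about contours.

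First I would apply Proposition~\ref{p:f[] via Gamma} to the left-hand side, taking as interpolation points the full list $\mu_1,\dots,\mu_k,\nu_1,\dots,\nu_m$. This gives
\begin{equation*}
f[\mu_1,\dots,\mu_k;\nu_1,\dots,\nu_m]=\frac1{2\pi i}\int_{\Gamma}\frac{f(z)}{\prod_{i=1}^k(z-\mu_i)\prod_{j=1}^m(z-\nu_j)}\,dz,
\end{equation*}
where $\Gamma$ encloses all $N$ points. Next I would check that $\tilde f$ is analytic in a neighbourhood of $\nu_1,\dots,\nu_m$: since these points lie in the half-plane $\Real z<0$ while the $\mu_i$ lie in $\Real z>0$, the denominator $\prod_{i=1}^k(z-\mu_i)$ does not vanish near the $\nu_j$, so $\tilde f$ is analytic there and Proposition~\ref{p:f[] via Gamma} applies to it. Substituting the definition of $\tilde f$, I would obtain
\begin{equation*}
\tilde f[\nu_1,\dots,\nu_m]=\frac1{2\pi i}\int_{\Gamma'}\frac{f(z)}{\prod_{i=1}^k(z-\mu_i)\prod_{j=1}^m(z-\nu_j)}\,dz,
\end{equation*}
where $\Gamma'$ encloses only $\nu_1,\dots,\nu_m$. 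The two integrands are now literally identical.

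Hence the claimed equality becomes the assertion that the contour $\Gamma$ may be shrunk to the smaller contour $\Gamma'$. By the residue theorem, the difference of the two integrals is the sum of the residues of the common integrand at the points lying inside $\Gamma$ but outside $\Gamma'$, that is, at the right-half-plane points $\mu_1,\dots,\mu_k$. Here I would invoke the hypothesis: $f$ is identically zero on the open right half-plane $\Real z>0$, and each $\mu_i$ lies in this open set, so $f$ vanishes identically in a neighbourhood of $\mu_i$. Consequently the integrand vanishes identically near each $\mu_i$, every such residue is zero, and the two integrals coincide, which is exactly the claim.

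The argument is short and I do not expect a serious obstacle; the only points demanding care are the analyticity of $\tilde f$ near the $\nu_j$ (secured by the separation of the $\mu_i$ and $\nu_j$ across the imaginary axis) and the bookkeeping of which singularities lie between $\Gamma'$ and $\Gamma$. The essential observation is simply that the hypothesis $f\equiv0$ on $\Real z>0$ forces the residues at the right-half-plane interpolation points to disappear.
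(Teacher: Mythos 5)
Your argument is correct, but it follows a genuinely different route from the paper's. The paper first assumes all interpolation points are distinct, applies the partial-fraction formula of Proposition~\ref{p:repr of Delta} to write $f[\mu_1,\dots,\mu_k;\nu_1,\dots,\nu_m]$ as a sum over the $\nu_q$ only (the terms at the $\mu_i$ drop out since $f(\mu_i)=0$), recognizes the result as $\tilde f[\nu_1,\dots,\nu_m]$, and then handles multiple points by a passage to the limit, using the continuity of divided differences in their arguments (which it extracts from Proposition~\ref{p:f[] via Gamma}). You instead work with the contour-integral representation from the start, which buys you the case of repeated points for free --- no limiting argument is needed, since Proposition~\ref{p:f[] via Gamma} is stated for possibly coinciding interpolation points. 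One detail in your write-up deserves sharpening: since $f$ (e.g.\ $\exp^+_t$) is analytic only on $\mathbb C\setminus i\mathbb R$, the ``contour $\Gamma$ enclosing all the points'' is necessarily a cycle with two components, one around the $\mu_i$ in the right half-plane and one around the $\nu_j$ in the left half-plane; one cannot literally deform a single loop across the imaginary axis. But this only simplifies matters: the component in the right half-plane contributes zero outright because $f\equiv0$ there, and the remaining component is exactly the contour $\Gamma'$ for $\tilde f[\nu_1,\dots,\nu_m]$ (with $\tilde f$ analytic near the $\nu_j$, as you correctly verify). So your residue bookkeeping at the $\mu_i$ can be replaced by this even shorter observation, and the proof stands.
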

\begin{proof}
Suppose that all multiplicities equal 1.
By Proposition~\ref{p:repr of Delta}, we have
\begin{align*}
f[\mu_1,\dots,\mu_k;\nu_{1},\dots,\nu_m]&=\sum_{q=1}^m\frac{f(\nu_q)}
{\prod\limits_{i=1}^k(\nu_q-\mu_i)\prod\limits_{\substack{j=1\\j\neq q}}^m(\nu_q-\nu_j)}
=\sum_{q=1}^m\frac{\frac{f(\nu_q)}
{\prod\limits_{i=1}^k(\nu_q-\mu_i)}}{\prod\limits_{\substack{j=1\\j\neq q}}^m(\nu_q-\nu_j)}\\
&=\tilde f[\nu_{1},\dots,\nu_m].
\end{align*}
From Proposition~\ref{p:f[] via Gamma} it easily follows that
divided differences continuously depend on their arguments. Hence, the case of multiple points of interpolation is obtained by a passage to the limit.
\end{proof}

\begin{theorem}[{\rm\cite{Kurbatov-Kurbatova17:Arxiv}}]\label{t:p_t}
Let us arrange the roots of the characteristic polynomial in the following order{\rm:}
\begin{equation}\label{e:order}
\mu_1,\dots,\mu_k;\,\nu_1,\dots,\nu_m.
\end{equation}
Then the Newton interpolating polynomial $p_t^+$ of the function $\exp^+_t$ takes the form
\begin{equation}\label{e:p+}
p_t^+(z)=(z-\mu_1)\dots(z-\mu_k)q_t^+(z),
\end{equation}
where
\begin{equation*}
q_t^+(z)=\widetilde{\exp^+_t}[\nu_{1}]+\dots+\widetilde{\exp^+_t}[\nu_{1},\dots,\nu_m](z-\nu_{1})\dots(z-\nu_{m-1})
\end{equation*}
is the interpolating polynomial of the function
\begin{equation*}
\widetilde{\exp^+_t}(z)=\frac{\exp^+_t(z)}{\prod_{i=1}^k(z-\mu_i)}
\end{equation*}
with respect to the points $\nu_1,\dots,\nu_m$.
The interpolating polynomial $p_t^-$ of the function $\exp^-_t$ can be represented in the form
\begin{equation*}
p_t^-(z)=(z-\nu_1)\dots(z-\nu_m)q_t^-(z),
\end{equation*}
where
\begin{equation*}
q_t^-(z)=\widetilde{\exp^-_t}[\mu_{1}]+\dots+\widetilde{\exp^-_t}[\mu_{1},\dots,\mu_k](z-\mu_{1})\dots(z-\mu_{k-1})
\end{equation*}
is the interpolating polynomial of the function
\begin{equation*}
\widetilde{\exp^-_t}(z)=\frac{\exp^-_t(z)}{\prod_{j=1}^m(z-\nu_i)}
\end{equation*}
with respect to the points $\mu_1,\dots,\mu_k$.
\end{theorem}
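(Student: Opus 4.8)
The plan is to write the Newton interpolating polynomial of $\exp^+_t$ with respect to the ordered points~\eqref{e:order} explicitly via Proposition~\ref{p:Newton poly}, and then to exploit the fact that $\exp^+_t$ vanishes identically on the whole open right half-plane $\Real\lambda>0$, which contains all the points $\mu_1,\dots,\mu_k$. Relabelling the ordered list~\eqref{e:order} as $\omega_1,\dots,\omega_N$ (so that $\omega_1=\mu_1,\dots,\omega_k=\mu_k$, $\omega_{k+1}=\nu_1,\dots,\omega_N=\nu_m$), formula~\eqref{e:poly Newton} reads
\[
p_t^+(z)=\sum_{\ell=1}^N\exp^+_t[\omega_1,\dots,\omega_\ell]\prod_{r=1}^{\ell-1}(z-\omega_r).
\]

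First I would show that every term with $\ell\le k$ vanishes. Each such divided difference $\exp^+_t[\mu_1,\dots,\mu_\ell]$ involves only points lying in the open right half-plane; by the integral representation of Proposition~\ref{p:f[] via Gamma} the contour $\Gamma$ may be taken inside that half-plane, where $\exp^+_t\equiv0$, so the integral, and hence the divided difference, is zero. (Equivalently, $\exp^+_t$ is identically zero on a connected open set containing all the $\mu_i$, so all of its divided differences built from these points, derivatives included, vanish.) Thus the sum effectively starts at $\ell=k+1$. For every such $\ell$ the product $\prod_{r=1}^{\ell-1}(z-\omega_r)$ contains the full factor $(z-\mu_1)\dots(z-\mu_k)$, which I would pull out, obtaining after the substitution $s=\ell-k$
\[
p_t^+(z)=(z-\mu_1)\dots(z-\mu_k)\sum_{s=1}^m\exp^+_t[\mu_1,\dots,\mu_k,\nu_1,\dots,\nu_s](z-\nu_1)\dots(z-\nu_{s-1}).
\]
I would then apply Proposition~\ref{p:mu-nu:2} (with $\nu_1,\dots,\nu_s$ in place of the full family of left half-plane roots) to each coefficient, turning $\exp^+_t[\mu_1,\dots,\mu_k,\nu_1,\dots,\nu_s]$ into $\widetilde{\exp^+_t}[\nu_1,\dots,\nu_s]$. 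The resulting sum is exactly the Newton form~\eqref{e:poly Newton} of the interpolating polynomial of $\widetilde{\exp^+_t}$ at the points $\nu_1,\dots,\nu_m$, i.e. the claimed $q_t^+$. The statement for $p_t^-$ follows by the symmetric argument, interchanging the roles of the $\mu$'s and the $\nu$'s and using that $\exp^-_t$ vanishes on the left half-plane.

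The main obstacle, and really the only nonroutine point, is the pair consisting of the vanishing of the divided differences supported on the $\mu_i$ and the clean reduction furnished by Proposition~\ref{p:mu-nu:2}; everything else is the bookkeeping of factoring the Newton form, for which the choice of ordering~\eqref{e:order} (all right half-plane roots first) is precisely what makes the common factor $(z-\mu_1)\dots(z-\mu_k)$ appear. A minor care point is the case of multiple roots: since $\exp^+_t$ vanishes on an open set, all of its derivatives vanish there as well, so coincidences among the $\mu_i$ do not affect the argument, and Proposition~\ref{p:mu-nu:2} already handles multiple interpolation points by the passage to the limit in its proof.
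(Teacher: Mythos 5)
Your proposal is correct and follows essentially the same route as the paper: observe that the divided differences built solely from the $\mu_i$ vanish, factor the common product $(z-\mu_1)\dots(z-\mu_k)$ out of the Newton form~\eqref{e:poly Newton}, and apply Proposition~\ref{p:mu-nu:2} to the remaining coefficients. Your extra care about multiple roots (noting that $\exp^+_t$ vanishes on an open set, so all derivatives vanish too) is a welcome refinement of the paper's terser observation that $\exp^+_t(\mu_i)=0$.
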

\begin{proof}
We observe that $\exp^+_t(\mu_i)=0$, $i=1,\dots,k$. Therefore
\begin{equation*}
\exp^+_t[\mu_1]=\dots=\exp^+_t[\mu_1,\dots,\mu_k]=0.
\end{equation*}
Now from Proposition~\ref{p:Newton poly} it follows that
\begin{align*}
p_t^+(z)&=\exp^+_t[\mu_1,\dots,\mu_k;\nu_{1}](z-\mu_1)\dots(z-\mu_k)+\dots\\
&+\exp^+_t[\mu_1,\dots,\mu_k;\nu_{1},\dots,\nu_m](z-\mu_1)\dots(z-\mu_k)(z-\nu_{1})\dots(z-\nu_{m-1}).
\end{align*}
It remains to apply Proposition~\ref{p:mu-nu:2}.
\end{proof}

\section{The estimate}\label{s:estimate}
In this Section we prove an estimate of Green's function. As a potential application of this estimate, we note that knowing an estimate of the function $t\mapsto\Vert\mathcal G(t)\Vert$ is an important information in the freezing method for equations with slowly varying coefficients~\cite[\S~10.2]{Bylov-Vinograd:rus1},~\cite[\S~7.4]{Henry81:eng},~\cite[ch.~10, \S~3]{Levitan-Zhikov82:eng},~\cite{Baskakov93:eng,Behncke-Hinton-Remling,Kuznetsova85:eng,
Kuznetsova90:eng,Potzsche,Robinson,Xiao}. See also references therein.

\begin{lemma}\label{l:der est}
Let $\gamma^-,\gamma^+>0$, $\Real z\le-\gamma^-$, and $\Real\mu_j\ge\gamma^+$ for $j=1,\dots,k$. Then for $k\ge1$ we have
\begin{equation}\label{e:est1}
\biggl|\frac{d^l}{dz^l}\frac{e^{zt}}{\prod_{j=1}^k(z-\mu_j)}\biggr|\le
e^{-\gamma^- t}\sum_{i=0}^lt^{l-i}\binom{l}{i}\frac{(k+i-1)!}{(k-1)!}\frac1{\gamma^{k+i}},\qquad t>0,
\end{equation}
where $\gamma=\gamma^-+\gamma^+$.
But for $k=0$
\begin{equation}\label{e:est2}
\Bigl|\frac{d^l}{dz^l}e^{zt}\Bigr|\le e^{-\gamma^- t}\,t^{l},\qquad t>0.
\end{equation}
\end{lemma}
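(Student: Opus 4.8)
The plan is to factor the function as $e^{zt}\cdot g(z)$ with $g(z)=\prod_{j=1}^k(z-\mu_j)^{-1}$ and apply the Leibniz rule, so that differentiation of the exponential and of the rational factor decouple cleanly. Before doing anything else I would record the single geometric fact that drives the whole argument: since $\Real z\le-\gamma^-$ and $\Real\mu_j\ge\gamma^+$, we have $\Real(z-\mu_j)\le-\gamma$, whence
\begin{equation*}
|z-\mu_j|\ge|\Real(z-\mu_j)|\ge\gamma=\gamma^-+\gamma^+.
\end{equation*}
The same hypothesis gives $|e^{zt}|=e^{t\Real z}\le e^{-\gamma^- t}$ for $t>0$. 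These two bounds are the only places where the assumptions on $z$ and on the $\mu_j$ enter.

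For the main case $k\ge1$, the ordinary Leibniz rule together with $\frac{d^{l-i}}{dz^{l-i}}e^{zt}=t^{l-i}e^{zt}$ yields
\begin{equation*}
\frac{d^l}{dz^l}\bigl(e^{zt}g(z)\bigr)=\sum_{i=0}^l\binom{l}{i}\,t^{l-i}e^{zt}\,g^{(i)}(z).
\end{equation*}
Comparing with the right-hand side of~\eqref{e:est1}, I expect every factor $\binom{l}{i}t^{l-i}e^{-\gamma^- t}$ to be accounted for, so the task reduces to the pointwise bound
\begin{equation*}
|g^{(i)}(z)|\le\frac{(k+i-1)!}{(k-1)!}\,\frac1{\gamma^{k+i}}.
\end{equation*}

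This inequality is the heart of the matter and the step I expect to be the main obstacle, since it requires controlling an arbitrary-order derivative of a product of $k$ simple poles. I would compute $g^{(i)}$ by the general multinomial Leibniz rule applied to the $k$ factors $(z-\mu_j)^{-1}$, using $\frac{d^{i_j}}{dz^{i_j}}(z-\mu_j)^{-1}=(-1)^{i_j}i_j!\,(z-\mu_j)^{-(i_j+1)}$. The multinomial coefficient cancels against the product of the $i_j!$, leaving the tidy identity
\begin{equation*}
g^{(i)}(z)=(-1)^i\,i!\sum_{i_1+\dots+i_k=i}\prod_{j=1}^k\frac1{(z-\mu_j)^{i_j+1}}.
\end{equation*}
Bounding each factor by $|z-\mu_j|^{-1}\le\gamma^{-1}$ collapses every summand to $\gamma^{-(i+k)}$, so the estimate becomes $i!\,\gamma^{-(i+k)}$ times the number of weak compositions of $i$ into $k$ parts. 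That count is $\binom{i+k-1}{k-1}$, and the arithmetic $i!\binom{i+k-1}{k-1}=\frac{(k+i-1)!}{(k-1)!}$ reproduces exactly the claimed factor. Substituting back into the Leibniz sum and extracting $e^{-\gamma^- t}$ gives~\eqref{e:est1}.

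Finally, the case $k=0$ is immediate and I would dispatch it separately: the rational factor is absent, $\frac{d^l}{dz^l}e^{zt}=t^l e^{zt}$, and the bound $|e^{zt}|\le e^{-\gamma^- t}$ delivers~\eqref{e:est2} directly.
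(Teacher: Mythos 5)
Your proof is correct and follows essentially the same route as the paper: Leibniz's rule applied to the product $e^{zt}\cdot\prod_{j=1}^k(z-\mu_j)^{-1}$, the bounds $|e^{zt}|\le e^{-\gamma^- t}$ and $|z-\mu_j|\ge\gamma$, and the reduction to the inner estimate $|g^{(i)}(z)|\le\frac{(k+i-1)!}{(k-1)!}\gamma^{-(k+i)}$. The only difference is in how that inner estimate is obtained: the paper uses an inductive term-counting argument (each differentiation turns every term $\prod(z-\mu_j^{(p)})^{-1}$ into $k+p$ new ones, giving $k(k+1)\cdots(k+i-1)$ terms in total), whereas you get the same count $i!\binom{i+k-1}{k-1}=\frac{(k+i-1)!}{(k-1)!}$ in closed form from the multinomial Leibniz rule and the number of weak compositions of $i$ into $k$ parts.
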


\begin{remark}\label{r:1}
Formula~\eqref{e:est2} becomes a special case of~\eqref{e:est1} if one sets
$\frac{(-1)!}{(-1)!}=1$ and $\frac{(i-1)!}{(-1)!}=0$ for $i=1,2,\dots$.
\end{remark}
\begin{proof}
By the general Leibniz product differentiation rule~\cite[ch. 1, \S~3, Proposition 2]{Bourbaki-FRV:eng} we have the identity
\begin{equation*}
\biggl[\frac{e^{zt}}{\prod_{j=1}^k(z-\mu_j)}\biggr]^{(l)}=
e^{zt}\sum_{i=0}^l\binom{l}{i}t^{m-i}\biggl[\frac{1}{\prod_{j=1}^k(z-\mu_j)}\biggr]^{(i)}.
\end{equation*}
In order to complete the proof, it is enough to show that
\begin{equation*}
\biggl|\biggl[\frac{1}{\prod_{j=1}^k(z-\mu_j)}\biggr]^{(i)}\biggr|\le
\frac{(k+i-1)!}{(k-1)!}\frac1{\gamma^{k+i}}.
\end{equation*}
We recall that among the numbers $\mu_j$ there may be repeating ones.

We note that by the product differentiation rule~\cite[ch. 1, \S~1, Proposition 3]{Bourbaki-FRV:eng}, the derivative
$\Bigl[\frac{1}{\prod_{j=1}^k(z-\mu_j)}\Bigr]^{'}$ is the sum of $k$ summands of the form $\frac{-1}{\prod_{j=1}^{k+1}(z-\mu_j^{(1)})}$, where $\mu_j^{(1)}$ are the old numbers $\mu_j$, but one of them is repeated twice. In the course of the next differentiation, each term $\frac{1}{\prod_{j=1}^{k+1}(z-\mu_j^{(1)})}$ turns into $k+1$ terms of the form $\frac{-1}{\prod_{j=1}^{k+2}(z-\mu_j^{(2)})}$, and the entire first derivative is transformed into $k(k+1)$ terms of the form $\frac{1}{\prod_{j=1}^{k+2}(z-\mu_j^{(2)})}$, where $\mu_j^{(2)}$ are some numbers satisfying the condition $\Real\mu_j^{(2)}\ge\gamma^+$. The third derivative $\Bigl[\frac{1}{\prod_{j=1}^k(z-\mu_j)}\Bigr]^{(3)}$ consists of $k(k+1)(k+2)$ summands of the form $\frac{-1}{\prod_{j=1}^{k+3}(z-\mu_j^{(3)})}$. And so on.

Each term of the $i$-th derivative is less in absolute value than or equal to $\frac1{\gamma^{k+i}}$, and the total number of terms is $k(k+1)(k+2)(k+i-1)=\frac{(k+i-1)!}{(k-1)!}$.
\end{proof}

Let us fix a norm in $\mathbb C^N$. We define the norm of an $N\times N$-matrix $A$ as the norm of the linear operator acting in $\mathbb C^N$ induced by $A$.

\begin{theorem}\label{t:VinGr}
Let assumption~\eqref{e:gamma pm} be satisfied. We set $\gamma=\gamma^-+\gamma^-$.
Then Green's function satisfies the estimates
\begin{align}
\Vert\mathcal G(t)\Vert&\le e^{-\gamma^-t}\sum_{j=0}^{m-1}\frac{t^{j-i}}{(j-i)!}
\sum_{i=0}^j\binom{k+i-1}{k-1}
\frac{(2\Vert A\Vert)^{k+j}}{\gamma^{k+i}},& t&>0,\label{e:estG1}\\
\Vert\mathcal G(t)\Vert&\le e^{\gamma^+t}\sum_{j=0}^{k-1}\frac{t^{j-i}}{(j-i)!}
\sum_{i=0}^j\binom{m+i-1}{m-1}
\frac{(2\Vert A\Vert)^{m+j}}{\gamma^{m+i}},& t&<0.\label{e:estG2}
\end{align}
\end{theorem}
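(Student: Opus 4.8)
The plan is to carry out the case $t>0$ in full and obtain the case $t<0$ by the symmetric argument in which the roles of $\{\mu_i\}$ and $\{\nu_j\}$ are interchanged. For $t>0$ Green's function equals $\mathcal G(t)=\exp^+_t(A)$, and by Proposition~\ref{p:f->p} this is $p_t^+(A)$, where $p_t^+$ is the Newton interpolating polynomial of $\exp^+_t$. Combining the explicit form in the proof of Theorem~\ref{t:p_t} with Proposition~\ref{p:mu-nu:2}, I would write
\begin{equation*}
p_t^+(z)=\sum_{r=1}^m\widetilde{\exp^+_t}[\nu_1,\dots,\nu_r]\,(z-\mu_1)\cdots(z-\mu_k)(z-\nu_1)\cdots(z-\nu_{r-1}),
\end{equation*}
substitute the matrix $A$ for $z$, and pass to the operator norm. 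Since the norm on matrices is induced by a vector norm, the spectral radius bound gives $|\mu_i|,|\nu_j|\le\Vert A\Vert$, hence $\Vert A-\mu_i\mathbf1\Vert\le2\Vert A\Vert$ and likewise for each $\nu_j$; therefore the product of the $k+r-1$ factors is bounded by $(2\Vert A\Vert)^{k+r-1}$.

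The heart of the estimate is to control the scalar coefficients $\widetilde{\exp^+_t}[\nu_1,\dots,\nu_r]$. Here I would invoke Proposition~\ref{p:Gelfond(49)}. Because every $\nu_j$ satisfies $\Real\nu_j\le-\gamma^-$ and a half-plane is convex, the convex hull of $\{\nu_1,\dots,\nu_r\}$ lies in $\Real\lambda\le-\gamma^-$. On that region $\exp^+_t(\lambda)=e^{\lambda t}$, so $\widetilde{\exp^+_t}(\lambda)$ coincides with the analytic function $e^{\lambda t}/\prod_{i=1}^k(\lambda-\mu_i)$ whose derivatives are precisely what Lemma~\ref{l:der est} estimates. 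Applying Proposition~\ref{p:Gelfond(49)} and then Lemma~\ref{l:der est} with $l=r-1$ yields
\begin{equation*}
\bigl|\widetilde{\exp^+_t}[\nu_1,\dots,\nu_r]\bigr|\le\frac{e^{-\gamma^-t}}{(r-1)!}\sum_{i=0}^{r-1}t^{r-1-i}\binom{r-1}{i}\frac{(k+i-1)!}{(k-1)!}\frac1{\gamma^{k+i}}.
\end{equation*}

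Multiplying this by the matrix bound $(2\Vert A\Vert)^{k+r-1}$, summing over $r$, and setting $j=r-1$, the claimed inequality~\eqref{e:estG1} emerges once one performs the combinatorial simplification
\begin{equation*}
\frac1{j!}\binom{j}{i}\frac{(k+i-1)!}{(k-1)!}=\frac1{(j-i)!}\binom{k+i-1}{k-1},
\end{equation*}
which folds the factorials coming from the Leibniz expansion inside Lemma~\ref{l:der est} into the binomial coefficient and the factor $1/(j-i)!$. I expect the main obstacle to be the bookkeeping in this final recombination: three index ranges must be kept aligned---the index $r$ from the Newton polynomial, the index $i$ from the Leibniz rule inside Lemma~\ref{l:der est}, and the binomial generated by Proposition~\ref{p:Gelfond(49)}---so that the powers of $t$, of $\gamma$, and of $2\Vert A\Vert$ land in their correct places. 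The case $t<0$ is obtained verbatim after replacing $\exp^+_t,\mu_i,\gamma^-,k$ by $\exp^-_t,\nu_j,\gamma^+,m$ and using that $|e^{zt}|\le e^{\gamma^+t}$ together with $|z-\nu_j|\ge\gamma$ whenever $\Real z\ge\gamma^+$, which gives~\eqref{e:estG2}.
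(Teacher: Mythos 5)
Your proposal follows essentially the same route as the paper: represent $\mathcal G(t)=p_t^+(A)$ via the factored Newton form of Theorem~\ref{t:p_t}, bound each linear factor by $2\Vert A\Vert$, control the divided differences $\widetilde{\exp^+_t}[\nu_1,\dots,\nu_r]$ through Proposition~\ref{p:Gelfond(49)} combined with Lemma~\ref{l:der est}, and recombine the factorials exactly as in the paper's final display. The argument is correct, and your explicit justification of $\Vert A-\mu_i\mathbf1\Vert\le2\Vert A\Vert$ via the spectral radius is a small point the paper leaves as ``clearly''.
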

\begin{proof}
We consider the case $t>0$. We represent the Newton interpolating polynomial $p_t^+$ of the function $\exp^+_t$ in the form~\eqref{e:p+}.
By Propositions~\ref{p:func calc} and~\ref{p:f->p} we have
\begin{equation*}
\mathcal G(t)=\exp^+_t(A)=p_t^+(A)=(A-\mu_1\mathbf1)\dots(A-\mu_k\mathbf1)q_t^+(A),\qquad t>0,
\end{equation*}
where
\begin{equation}\label{e:q_t+}
q_t^+(A)=\widetilde{\exp^+_t}[\nu_{1}]\mathbf1+\dots+\widetilde{\exp^+_t}[\nu_{1},\dots,\nu_m](A-\nu_{1}\mathbf1)\dots(A-\nu_{m-1}\mathbf1).
\end{equation}
Clearly, $\Vert A-\mu_i\mathbf1\Vert\le2\Vert A\Vert$.
Therefore
\begin{equation}\label{e:G-est}
\Vert\mathcal G(t)\Vert=\Vert p_t(A)\Vert\le(2\Vert A\Vert)^k\Vert q_t^+(A)\Vert.
\end{equation}

From representation~\eqref{e:q_t+}, Proposition~\ref{p:Gelfond(49)}, Theorem~\ref{t:p_t}, and Lemma~\ref{l:der est} we have
\begin{align*}
\Vert q_t^+(A)\Vert&\le\sum_{j=0}^{m-1}\Bigl|\widetilde{\exp^+_t}[\nu_{1},\dots,\nu_{j+1}]\Bigr|
(2\Vert A\Vert)^{j}\\
&\le\sum_{j=0}^{m-1}\frac1{j!}\max_{\lambda\in\co\{\nu_{1},\dots,\nu_{j+1}\}}
\Bigl|\widetilde{\exp^+_t}^{(j)}(\lambda)\Bigr|(2\Vert A\Vert)^{j}\\
&\le e^{-\gamma^-t}\sum_{j=0}^{m-1}\frac1{j!}
\sum_{i=0}^jt^{j-i}\binom{j}{i}\frac{(k+i-1)!}{(k-1)!}
\frac{(2\Vert A\Vert)^{j}}{\gamma^{k+i}}\\
&\le e^{-\gamma^-t}\sum_{j=0}^{m-1}
\sum_{i=0}^jt^{j-i}\frac{1}{i!(j-i)!}\frac{(k+i-1)!}{(k-1)!}\frac{(2\Vert A\Vert)^{j}}{\gamma^{k+i}}\\
&\le e^{-\gamma^-t}\sum_{j=0}^{m-1}
\sum_{i=0}^j\frac{t^{j-i}}{(j-i)!}\binom{k+i-1}{k-1}
\frac{(2\Vert A\Vert)^{j}}{\gamma^{k+i}}.
\end{align*}


Taking~\eqref{e:G-est} into account we arrive at
\begin{align*}
\Vert\mathcal G(t)\Vert&\le e^{-\gamma^-t}\sum_{j=0}^{m-1}
\sum_{i=0}^j\frac{t^{j-i}}{(j-i)!}\binom{k+i-1}{k-1}
\frac{(2\Vert A\Vert)^{k+j}}{\gamma^{k+i}},& t&>0.
\end{align*}

Formula~\eqref{e:estG2} is proved in a similar way.
\end{proof}

\begin{example}\label{ex:1}
For $N=6$ and $k=3$ estimate~\eqref{e:estG1} has the form
\begin{align*}
\Vert\mathcal G(t)\Vert&\le e^{-\gamma^-t}\Bigl(\frac{6 \Vert A\Vert^5}{\gamma ^5}+\frac{6 \Vert A\Vert^5 t}{\gamma ^4}+\frac{2 \Vert A\Vert^5 t^2}{\gamma ^3}+\frac{3 \Vert A\Vert^4}{\gamma ^4}+\frac{2 \Vert A\Vert^4
   t}{\gamma ^3}+\frac{\Vert A\Vert^3}{\gamma ^3}\Bigr),\qquad t>0.
\end{align*}
\end{example}

\begin{corollary}[{\rm\cite[p.~131, Lemma 10.2.1]{Bylov-Vinograd:rus1}, \cite[p.~68, formula (13)]{Gelfand-Shilov-GF3:eng}}]\label{p:Bylov-Vinograd}
Let the eigenvalues of the matrix $A$ lie in the half-plane $\Real\lambda<-\gamma^-$, where $\gamma^->0$. Then
\begin{equation*}
\Vert e^{At}\Vert\le e^{-\gamma^- t}\sum_{j=0}^{N-1}\frac{(2t\Vert A\Vert)^j}{j!},\qquad t>0.
\end{equation*}
\end{corollary}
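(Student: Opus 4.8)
The plan is to obtain this classical bound as the degenerate case of Theorem~\ref{t:VinGr} in which every eigenvalue lies in the left half-plane. The hypothesis $\Real\lambda<-\gamma^-$ for all eigenvalues means that none of the roots of the characteristic polynomial lies in the open right half-plane, so in the notation preceding Theorem~\ref{t:VinGr} we have $k=0$ and $m=N$, and the numbers $\nu_1,\dots,\nu_N$ form the full list of eigenvalues, each satisfying $\Real\nu_j\le-\gamma^-$.

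First I would identify $e^{At}$ with Green's function at positive times. Since $\sigma(A)$ is contained in the open left half-plane, on a neighbourhood of $\sigma(A)$ the function $\exp^+_t$ coincides with $\lambda\mapsto e^{\lambda t}$; hence by the functional calculus $\mathcal G(t)=\exp^+_t(A)=e^{At}$ for every $t>0$. Thus it suffices to estimate $\Vert\mathcal G(t)\Vert$, and this is exactly what estimate~\eqref{e:estG1} provides.

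Next I would specialize~\eqref{e:estG1} to $k=0$, $m=N$. The only point requiring care is the evaluation of the binomial factor $\binom{k+i-1}{k-1}=\frac{(k+i-1)!}{(k-1)!\,i!}$ when $k=0$. Using the conventions recorded in Remark~\ref{r:1}, namely $\frac{(-1)!}{(-1)!}=1$ and $\frac{(i-1)!}{(-1)!}=0$ for $i\ge1$, this factor equals $1$ for $i=0$ and vanishes for every $i\ge1$. Consequently the inner sum over $i$ collapses to its single term $i=0$, in which $\gamma^{k+i}=\gamma^0=1$, so that the quantity $\gamma^+$ (undetermined here, since there are no points $\mu_i$) disappears altogether. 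What remains is
\begin{equation*}
\Vert e^{At}\Vert\le e^{-\gamma^-t}\sum_{j=0}^{N-1}\frac{t^{j}}{j!}\,(2\Vert A\Vert)^{j}=e^{-\gamma^-t}\sum_{j=0}^{N-1}\frac{(2t\Vert A\Vert)^{j}}{j!},
\end{equation*}
which is the asserted inequality.

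There is no genuine obstacle here: the entire content is already contained in Theorem~\ref{t:VinGr}, and the only thing to verify is that the degenerate case $k=0$—treated separately as~\eqref{e:est2} in Lemma~\ref{l:der est}—is faithfully reproduced by the general formula~\eqref{e:estG1} under the conventions of Remark~\ref{r:1}. The mild bookkeeping I would double-check is precisely that the $k=0$ branch of the Leibniz estimate and the binomial-collapse argument agree, so that no spurious dependence on $\gamma^+$ survives in the final bound.
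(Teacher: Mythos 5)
Your proposal is correct and coincides with the paper's own (second) suggested route: the paper's proof explicitly notes that the corollary "can also be obtained as a special case of~\eqref{e:estG1} if we take into account Remark~\ref{r:1}", which is exactly the specialization $k=0$, $m=N$ that you carry out, including the collapse of the inner sum to the $i=0$ term and the disappearance of $\gamma^+$.
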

\begin{proof}
The proof is similar to that of Theorem~\ref{t:VinGr}.

The proof can also be obtained as a special case of~\eqref{e:estG1} if we take into account Remark~\ref{r:1}.
\end{proof}


\end{document}